\newcommand{\be}{\begin{eqnarray*}}
\newcommand{\ea}{\end{eqnarray*}}
\newcommand{\bb}[1]{\mathbb{#1}}
\newcommand{\inner}[2]{\langle #1 | #2 \rangle}
\newcommand{\off}[1]{}
\newcommand{\fig}{\begin{figure}[ht]\centering\begin{tikzpicture} }
\newcommand{\ure}[1]{\end{tikzpicture} \caption{#1}\end{figure}}
\newtheorem{thm}{Theorem}
\newcommand{\irr}{\mathrm{Irr}}
\begin{document}
\title{On the 5/8 bound for non-Abelian Groups}
\author{John Mangual\footnote{\tt{john.mangual@gmail.com}}}
\date{5/23/12}
\maketitle
\begin{abstract}If we pick two elements of a non-abelian group at random, the odds
this pair commutes is at most 5/8, so there is a ``gap" between abelian and non-abelian groups \cite{G}.   We prove a ``topological" genearlization estimating the odds a word
representing the fundamental group of an orientable surface $\langle x,y: [x_1,y_1][x_2,y_2]\dots[x_n,y_n]=1 \rangle$ is satisfied.
This resolve
a conjecture by Langley, Levitt and Rower. \end{abstract}
\section{Counting Solutions to Equations in Groups}

\begin{paragraph}{}Kopp and Wiltshire-Gordon wanted to know how often a given word, $w = w_1w_2\dots w_n$ is satisfied by elements of a group \cite{KW}.  
\[ \gamma_G(w) = \# \{ (g_1, \dots, g_n) \in G^n : w(g)=1\}  \]
Equivalently, they define a measure $\mu_w$ and for functions $f: G \to {\bb C}$
\[  \int_G f d\mu_w := \int_{G^n} f(w(g_1, \dots g_n)) dg_1\dots dg_n \]
Every word has an associated 2-dimensional CW complex, e.g. the commutator $[g_1,g_2]=g_1 g_2 g_1^{-1}g_2^{-1}$ corresponds to the torus, $\bb{T}^2$ and  $g_1g_2g_3g_1^{-1}g_4g_3^{-1}g_2^{-1}g_4^{-1}$ is also a torus. \end{paragraph}
\fig
\draw[-triangle 90](0,0)--(2,0);
\draw[-triangle 90](2,0)--(2,2);
\draw[-triangle 90](2,2)--(0,2);
\draw[-triangle 90](0,2)--(0,0);
\node at (1,0.3) {$H$};
\node at (2.4,1) {$V^{-1}$};
\node at (1,2.4) {$H^{-1}$};
\node at (0.3,1) {$V$};

\ure{The torus corresponds to the word $[V,H]=VHV^{-1}H^{-1}$.}
\begin{paragraph}{} For any word $w$, let $X(w)$ be the CW complex associated to this word. \end{paragraph}
\begin{thm}\cite{KW} Let $w_1, w_2$ be words and $G$ be compact group.  If $X(w_1)\simeq X(w_2)$ are homeomorphic, 
\[ \int_{G^m} f(w_1(\vec{g})) d\vec{g} = \int_{G^n} f(w_2(\vec{g})) d\vec{g} \] 
for all measurable functions $f: G \to {\bb C}$.  This means $\mu_{w_1} = \mu_{w_2}$.
\end{thm}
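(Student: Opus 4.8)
The plan is to reformulate $\mu_w$ as a pushforward of Haar measure under a monodromy map, strip off a spanning tree so that only $\pi_1$ of the $1$-skeleton intervenes, and then recognize that ``$\pi_1$-with-boundary-loop'' as a genuine topological invariant of $X(w)$ by puncturing its $2$-cell. Concretely: $\mu_w$ is by definition the pushforward of normalized Haar measure on $G^m$ --- with $m$ the number of distinct generators occurring in $w$, which is also the number of edges of $X(w)$ in the cell structure coming from the word (one $2$-cell, one edge per generator) --- under $\vec g\mapsto w(\vec g)$; being a word in the coordinates this map is continuous, so $\mu_w$ is Radon, and conjugating all $g_i$ by a fixed element (which telescopes to conjugating $w(\vec g)$ and preserves Haar) shows $\mu_w$ is conjugation-invariant. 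Now $G^m$ is exactly the space of $G$-connections on the $1$-skeleton $\Gamma_w := X(w)^{(1)}$, and $w(\vec g)$ is the holonomy around the attaching circle $\gamma_w$ of the unique $2$-cell, so $\mu_w = (\mathrm{hol}_{\gamma_w})_{*}(\text{Haar})$.

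Next I would reduce this to $\pi_1$. Fix a spanning tree of $\Gamma_w$ and write $G^{m}=G^{V-1}\times G^{r}$ accordingly, where $r=\operatorname{rank}\pi_1(\Gamma_w)=m-V+1$ and $V$ is the number of vertices; the holonomy of each non-tree loop has the form $u_i(\vec a)\,h_i\,v_i(\vec a)$ with $\vec a\in G^{V-1}$ the tree variables and $u_i,v_i$ words in them, so for fixed $\vec a$ the map $\vec h\mapsto(\text{these holonomies})$ is a product of left and right translations, hence Haar-preserving on $G^{r}$. Integrating out $\vec a$ gives $(\mathrm{hol}_{\gamma_w})_{*}(\text{Haar on }G^{m})=(\mathrm{ev}_{[\gamma_w]})_{*}(\text{Haar on }G^{r})$, where $G^{r}=\operatorname{Hom}(\pi_1(\Gamma_w),G)$ and $\mathrm{ev}_{[\gamma_w]}(\varphi)=\varphi(\gamma_w)$. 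Since $\operatorname{Aut}(F_r)$ acts on $G^{r}$ through Nielsen transformations, each of which is manifestly Haar-preserving, the measure ``Haar on $\operatorname{Hom}(F_r,G)$'' is independent of the chosen free basis, and evaluation at a conjugacy class is basis-free; so $\mu_w$ depends only on the isomorphism class of the pair $\bigl(\pi_1(\Gamma_w),[\gamma_w]\bigr)$.

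Finally I would make that pair topological. The complex $X(w)$ has a single $2$-cell; removing an interior point $p$ of it, the closed $2$-cell minus $p$ is a half-open annulus attached to $\Gamma_w$ along $\gamma_w$, so $X(w)\setminus\{p\}$ deformation-retracts onto $\Gamma_w$ with the linking circle of $p$ going to $\gamma_w$. Because there is only one $2$-cell, the locus where $X(w)$ is a topological surface is a single connected component --- the open $2$-cell, possibly enlarged along an edge the attaching map traverses more than once, but never breaking into a second component --- so $X(w)\setminus\{p\}$ does not depend on the choice of $p$ and is a homeomorphism invariant of $X(w)$; hence so is $\bigl(\pi_1(X(w)\setminus\{p\}),[\text{linking circle of }p]\bigr)\cong\bigl(\pi_1(\Gamma_w),[\gamma_w]\bigr)$. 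Combining the three steps, $X(w_1)\simeq X(w_2)$ forces $\mu_{w_1}=\mu_{w_2}$, which is the assertion (and then equality of $\int f(w_i(\vec g))\,d\vec g$ for every measurable $f$ is automatic).

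The step I expect to be the real obstacle is the last one, for \emph{degenerate} words, where the attaching map of the $2$-cell is far from an embedding: $w=x_1x_1^{-1}$ gives $X(w)\cong S^2$, $w=x_1^{2}$ gives $\mathbb{RP}^2$, $w=x_1^{3}$ a Moore space, and so on. One must check carefully that (a) no spurious second surface component ever appears, so ``$X(w)\setminus p$'' is unambiguous, and (b) the deformation retraction onto $\Gamma_w$ is compatible with the edge and vertex identifications already present in $\Gamma_w$. A more robust, if heavier, alternative would bypass the puncture entirely: present $\mu_w$ as the boundary datum of the two-dimensional Dijkgraaf--Witten / trivial-cocycle state sum for an arbitrary finite $2$-complex, prove its invariance under the bistellar (Pachner) moves using Schur orthogonality and the completeness relation $\sum_{\rho\in\operatorname{Irr}(G)}(\dim\rho)\,\chi_\rho=\delta_e$, and then apply the Hauptvermutung in dimension $\le 2$ to put $X(w_1)$ and $X(w_2)$ on a common subdivision.
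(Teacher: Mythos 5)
The paper does not actually prove this theorem --- it states it with the citation \cite{KW} and a pointer to 2D Yang--Mills / the Migdal formula, which is essentially the ``heavier alternative'' you describe in your last paragraph (Schur orthogonality plus invariance under subdivision). Your main argument is therefore a genuinely different, and more self-contained, route: realize $\mu_w$ as the pushforward of Haar measure under the holonomy of the attaching circle, gauge-fix a spanning tree to reduce to evaluation of $[\gamma_w]$ on $\mathrm{Hom}(F_r,G)$ with its Haar measure, note that this measure and the evaluation are basis-free because $\mathrm{Aut}(F_r)$ acts by Nielsen moves (each a translation in one coordinate for fixed others), and then recover the pair $\bigl(\pi_1(\Gamma_w),[\gamma_w]\bigr)$ topologically by puncturing the unique $2$-cell. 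This buys something the character-sum route does not: it works verbatim for any compact group and any word, with no Peter--Weyl decomposition and no appeal to triangulations or the Hauptvermutung. Your resolution of worry (a) is also the right one: the manifold locus of $X(w)$ is squeezed between the open $2$-cell and its closure (every edge and vertex lies on the boundary of the polygon), hence is a connected $2$-manifold, and homogeneity of connected manifolds makes $X(w)\setminus\{p\}$ unambiguous up to homeomorphism.

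The one genuine gap is a point you do not flag: orientation. A homeomorphism $\phi\colon X(w_1)\to X(w_2)$ carries the linking circle of $p$ to a loop that is freely homotopic to the linking circle of $\phi(p)$ only as an \emph{unoriented} curve --- $\phi$ may reverse the local orientation at $p$, and nothing forces a correction by a self-homeomorphism of $X(w_2)$. So the invariant your third step actually extracts is the pair $\bigl(\pi_1(\Gamma_w),\{[\gamma_w],[\gamma_w]^{-1}\}\bigr)$, and your second step then only yields $\mu_{w_1}\in\{\mu_{w_2},\,\iota_*\mu_{w_2}\}$ where $\iota(g)=g^{-1}$. To finish you need the additional lemma that every word measure is inversion-invariant, $\mu_w=\mu_{w^{-1}}$, equivalently that $\int_{G^r}\chi_\rho(w(\vec h))\,d\vec h$ is real for every irreducible unitary $\rho$. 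This does not follow from anything you wrote: conjugation-invariance of a measure does not imply inversion-invariance, and $w^{-1}$ is the image of the \emph{reversed} word under $x_i\mapsto x_i^{-1}$, so it need not lie in the $\mathrm{Aut}(F_r)$-orbit of $w$; the transpose trick $\tr A=\tr A^{T}$ merely exchanges the inversion problem with the reversal problem. That the lemma is true is in fact forced by the theorem itself (the reflected polygon exhibits $X(w)\cong X(w^{-1})$), which shows your argument cannot avoid confronting it: applied to $w_1=w$, $w_2=w^{-1}$, your proof as written returns only the tautology $\mu_w\in\{\mu_{w^{-1}},\iota_*\mu_{w^{-1}}\}$. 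Supply an independent proof of $\mu_w=\mu_{w^{-1}}$ (it is established in \cite{KW}) and your argument is complete.
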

\begin{paragraph}{}
In other words, they establish the measures $\mu_w$ are 
invarants of surfaces.  This can be proven using 2D Yang-Mills theory \cite{CMR, W, L}
or non-abelian group cohomology\footnote{http://terrytao.wordpress.com/2012/05/11/cayley-graphs-and-the-algebra-of-groups/ }. They also calculate the measure for a connected sum of tori and hence for all orientable surfaces. \end{paragraph}
\begin{thm}\cite{KW} Let $w$ be a word defining a orientable surface  (i.e. having each $g_k$ and $g_k^{-1}$ appear only once) and let $\rho: G \to {\rm GL}(V)$ be an irreducible representation.  The average value of $\rho(w)$ is proportional to the identity.
\[ \int_{G^n} \rho(w(\vec{g})) \, d\vec{g} = (\dim V)^{k- 2} I   \]
Taking the trace of both sides
\[ \int_{G^n} \rho (w(\vec{g})) d\vec{g} = (\dim V)^{k - 1}    \]
For any Haar-measurable function $f: G \to \bb{C}$
\[ \int_{G^n} f(w(\vec{g})) \, d\vec{g} = \sum_{\rho \in \hat{G}} \inner{\rho}{f} (\dim \rho)^{k-1} \]
\end{thm}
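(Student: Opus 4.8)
The plan is to prove the matrix identity $\int_{G^n}\rho(w(\vec g))\,d\vec g=(\dim V)^{k-2}I$ first, and then read off the trace identity by taking traces and the general-$f$ formula by Fourier inversion on $G$. The argument would have three moves: (i) reduce an arbitrary surface word $w$ to the standard genus-$g$ relator using the homeomorphism invariance of Theorem~1; (ii) evaluate that relator by induction on the genus, the only nontrivial ingredient being Schur's lemma applied to a single commutator; (iii) expand a general measurable $f$ over matrix coefficients via Peter--Weyl. Throughout I would take $\rho$ unitary, which is harmless since $G$ is compact.

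For (i): each matrix coefficient $\rho_{ij}\colon G\to\bb{C}$ is Haar-measurable, so Theorem~1 gives $\int_{G^m}\rho_{ij}(w(\vec g))\,d\vec g=\int_{G^{2g}}\rho_{ij}(w_g(\vec g))\,d\vec g$ for each $i,j$ whenever $X(w)\simeq X(w_g)$; assembling entries, the matrix $\int_{G^m}\rho(w(\vec g))\,d\vec g$ depends only on the homeomorphism type of $X(w)$. Since $w$ is a surface word, $X(w)$ is a closed orientable surface, hence homeomorphic to $\Sigma_g$ for a unique $g\ge 0$, whose standard model is realized by the relator $w_g:=[x_1,y_1]\cdots[x_g,y_g]$, and $k=\chi(X(w))=2-2g$. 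So it suffices to show $\int_{G^{2g}}\rho(w_g(\vec g))\,d\vec g=(\dim V)^{-2g}I$, which I would prove by induction on $g$: for $g=0$ the word is empty and $\rho(w_0)=I$; for the step, the $2g-2$ variables of $w_{g-1}$ are disjoint from $\{x_g,y_g\}$ and $\rho$ is a homomorphism, so Fubini factors the integral as $\bigl(\int_{G^{2g-2}}\rho(w_{g-1})\bigr)\bigl(\int_{G^2}\rho([x_g,y_g])\bigr)$, the first factor being $(\dim V)^{-(2g-2)}I$ by the inductive hypothesis.

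For (ii), the key identity $\int_{G^2}\rho([x,y])\,dx\,dy=(\dim V)^{-2}I$ follows by integrating over $x$ first: $\int_G\rho(x)A\rho(x)^{-1}\,dx$ commutes with $\rho(G)$, so by Schur's lemma it equals $\tfrac{\tr A}{\dim V}I$; with $A=\rho(y)$ this is $\tfrac{\chi_\rho(y)}{\dim V}I$, leaving $\tfrac{1}{\dim V}\int_G\chi_\rho(y)\,\rho(y^{-1})\,dy$, which the Schur orthogonality relations (with $\rho(y^{-1})_{ij}=\overline{\rho(y)_{ji}}$) evaluate to $\tfrac{1}{\dim V}I$. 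Taking the trace of the matrix identity gives $\int_{G^n}\chi_\rho(w(\vec g))\,d\vec g=(\dim V)^{k-2}\cdot\dim V=(\dim V)^{k-1}$. For (iii), Fourier inversion writes $f(x)=\sum_{\rho\in\hat{G}}\dim\rho\,\tr\bigl(\hat f(\rho)\rho(x)\bigr)$ with $\hat f(\rho)=\int_G f(x)\rho(x^{-1})\,dx$; substituting $x=w(\vec g)$, integrating over $G^n$, and inserting the matrix identity termwise gives $\int_{G^n}f(w(\vec g))\,d\vec g=\sum_\rho(\dim\rho)^{k-1}\tr\hat f(\rho)$, and since $\tr\hat f(\rho)=\int_G f(x)\overline{\chi_\rho(x)}\,dx=\inner{\rho}{f}$ this is the claimed formula.

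The core calculation --- two applications of Schur's lemma --- is short, so I expect the main obstacles to be at the joints. First, one must make explicit that Theorem~1 upgrades from scalar to matrix-valued integrands entry by entry, since this is what legitimizes the reduction to $w_g$. Second, in the passage to general $f$ one must pin down the normalizations of $\hat f(\rho)$ and $\inner{\rho}{f}$ so the stated constants appear, and --- for infinite compact $G$ --- restrict to $f$ for which $\sum_\rho(\dim\rho)^{k-1}\inner{\rho}{f}$ converges (e.g.\ $f$ smooth), a point that is vacuous when $G$ is finite. It is also worth checking the edge case $g=0$ (the sphere, $k=2$), where the formula reduces to $\int_G f(w(\vec g))\,d\vec g=\sum_\rho\dim\rho\,\inner{\rho}{f}=f(1)$, consistent with $w$ being trivially $1$. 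Alternatively, moves (i)--(ii) can be repackaged as a 2D Yang--Mills / cut-and-glue computation on $\Sigma_g$, but the induction above is self-contained once Theorem~1 is granted.
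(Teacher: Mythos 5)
The paper does not actually prove this theorem --- it is quoted from Kopp and Wiltshire-Gordon \cite{KW} with a pointer to the 2D Yang--Mills and nonabelian-cohomology literature --- so there is no in-text argument to compare against; your proposal supplies a correct, self-contained proof, and it is essentially the standard one underlying the cited result. The core steps all check out: the two applications of Schur's lemma give $\int_{G^2}\rho([x,y])\,dx\,dy=(\dim V)^{-2}I$, the entrywise Fubini factorization over disjoint variable blocks makes the genus induction legitimate despite matrix noncommutativity, and applying Theorem~1 to each matrix coefficient $\rho_{ij}$ is exactly the right way to reduce a general surface word to the standard relator $[x_1,y_1]\cdots[x_g,y_g]$. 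Two conventions deserve a flag. First, the paper never defines $k$ in this theorem; your reading $k=\chi(X(w))=2-2g$ is the only one that makes the displayed constants correct (for the torus, $k=0$ gives $(\dim V)^{-2}I$, matching your commutator computation), but note the paper later reuses $k$ to mean the \emph{genus}, so the two theorems are not literally consistent as printed --- worth saying explicitly if this proof is inserted. Second, the paper's stated inner product $\inner{\rho}{f}=\int_G\chi_\rho(g)\overline{f(g)}\,dg$ is the complex conjugate of your $\tr\hat f(\rho)=\int_G f(g)\overline{\chi_\rho(g)}\,dg$; the identity you derive is correct with your normalization (and the two agree for real-valued $f$, which covers every application in the paper), but one of the two conventions has to be adjusted for the final display to be an exact identity for arbitrary complex $f$. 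Your remarks about convergence for infinite compact $G$ and the $g=0$ sanity check are apt and cost nothing.
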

\begin{paragraph}{} Here the inner product $\inner{\rho}{f} = \int_G \chi_\rho(g)\overline{f(g)} \, dg$ integrated with respect to Haar measure. \end{paragraph}

\subsection{5/8 Bound}
\begin{paragraph}{}The word $ghg^{-1}h^{-1}=1$ corresponds to the torus if we label the two 
homology cycles with group elements $g,h$. \end{paragraph}
\begin{thm} Let $G$ be a nonabelian group.
\[  \frac{\{ (g,h)\in G^2: ghg^{-1}h^{-1}=1\}}{|G|^2} = \frac{c(G)}{|G|} \leq \frac{5}{8} \]
where $c(G)$ is the number of conjugacy classes. 
\end{thm}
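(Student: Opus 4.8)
The plan is to read the left-hand ratio as the probability that two independently and uniformly chosen elements of $G$ commute, prove the stated identity with $c(G)$, and then squeeze that probability using the center $Z=Z(G)$.

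First I would establish the equality $\#\{(g,h)\in G^2: ghg^{-1}h^{-1}=1\}=c(G)\,|G|$. Fixing $g$, the set of $h$ with $gh=hg$ is precisely the centralizer $C_G(g)$, so the left side equals $\sum_{g\in G}|C_G(g)|$. Now let $G$ act on itself by conjugation: the number of points fixed by $g$ is exactly $|C_G(g)|$, so Burnside's lemma identifies $\frac{1}{|G|}\sum_{g}|C_G(g)|$ with the number of orbits, i.e. with the number $c(G)$ of conjugacy classes. Dividing by $|G|^2$ gives $\frac{1}{|G|^2}\sum_{g}|C_G(g)|=\frac{c(G)}{|G|}$, which is the quantity to be bounded.

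Next I would split the sum according to membership in $Z$. For $g\in Z$ we have $C_G(g)=G$, while for $g\notin Z$ the subgroup $C_G(g)$ is proper, hence of index at least $2$, so $|C_G(g)|\le |G|/2$. Therefore
\[ \sum_{g\in G}|C_G(g)| \le |Z|\cdot|G| + (|G|-|Z|)\cdot\frac{|G|}{2}, \]
and dividing by $|G|^2$ yields $\frac{c(G)}{|G|}\le \frac12+\frac12\cdot\frac{|Z|}{|G|}=\frac12\bigl(1+\tfrac{1}{[G:Z]}\bigr)$. The last ingredient is the classical fact that $G/Z(G)$ is never a nontrivial cyclic group: if $G/Z=\langle gZ\rangle$ then every element of $G$ has the form $g^k z$ with $z\in Z$, and any two such elements commute, forcing $G$ abelian. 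Since $G$ is nonabelian, $G/Z$ is noncyclic, hence of order at least $4$, so $[G:Z]\ge 4$ and $\frac{c(G)}{|G|}\le \frac12\bigl(1+\tfrac14\bigr)=\frac58$.

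I do not expect a serious obstacle here; the only points requiring care are the two structural inputs (Burnside's lemma and the ``$G/Z$ cyclic $\Rightarrow$ $G$ abelian'' lemma) and the sharpness bookkeeping — equality forces $[G:Z]=4$ together with $|C_G(g)|=|G|/2$ for every noncentral $g$, and one should check these can be realized simultaneously, as they are for the dihedral group of order $8$ and the quaternion group $Q_8$.
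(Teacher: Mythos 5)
Your proof is correct, but it takes a genuinely different route from the paper. You give the classical centralizer argument: counting commuting pairs as $\sum_{g}|C_G(g)|$, invoking Burnside's lemma to identify $\tfrac{1}{|G|}\sum_g |C_G(g)|$ with $c(G)$, and then bounding the sum by splitting over the center, with the lemma ``$G/Z(G)$ cyclic $\Rightarrow G$ abelian'' forcing $[G:Z]\ge 4$. The paper instead argues by contradiction through character theory: assuming $c(G)>\tfrac58|G|$, it uses $|G|=\sum_\rho(\dim\rho)^2$ together with the fact that the number of irreducibles equals $c(G)$ to conclude that more than $4/5$ of the irreducible representations are one-dimensional, whence $|G/[G,G]|>\tfrac12|G|$ and $[G,G]$ is trivial. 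Your version is more elementary (no representation theory at all) and yields the sharper intermediate bound $\tfrac{c(G)}{|G|}\le\tfrac12\bigl(1+\tfrac{1}{[G:Z]}\bigr)$, which cleanly explains why $D_4$ and $Q_8$ are extremal. The paper's character-theoretic route is the one that actually scales to the rest of the paper: the later surface-word counts are expressed as sums of $(\dim\rho)^{k-1}$ over irreducibles, and the ``fraction of linear characters'' argument is reused verbatim there, whereas the centralizer decomposition has no analogue for those higher-genus words. Both inputs you flag (Burnside and the $G/Z$ lemma) are standard and correctly applied, so there is no gap.
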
 
\begin{proof}[Proof by Will Sawin]\footnote{http://mathoverflow.net/questions/91685/5-8-bound-in-group-theory} Assume $c(G) > \frac{5}{8}|G|$.  The order of the group is a sum over all characters 
\[  |G| = \sum (\dim \rho)^2  \]
and remember there are as many irreducible representations as conjugacy classes.
Get something like
\[\frac{8}{5} \langle 1 \rangle |G| = \frac{8}{5} \sum_\rho 1 > \sum_\rho (\dim \rho)^2  = \langle (\dim \rho )^2 \rangle |G| \]
On average the dimension-squared of the character $(\dim \rho)^2$ is less than $8/5$.  Then let $x$ be the fraction of representations with dimension at least $1$ and $(1-x)$ of them have dimension-squared at least $4$.
\[ x\cdot1 + (1-x) \cdot 4 < \frac{8}{5} \text{\quad{}so that\quad} x > \frac{4}{5} \text{\quad{}fraction are 1D characters}\]

Every homomorphism $\phi: G \to {\bb C}$ should satisfy $\phi(gh)=\phi(g)\phi(h) = \phi(h)\phi(g) = \phi(hg)$, so it is well-defined on the quotient $G/[G,G]$.
The abelianization $G/[G,G]$ will have one element per $1$-dimensional character of $G$.
\[ |G/[G,G]| = \frac{|G|}{|[G,G]|} > \frac{4}{5}|G| \text{\quad{}so that\quad} |[G,G]|< \frac{5}{4}  \]
and so $|[G,G]|=1$, every pair of elements commute.
\end{proof}

\begin{paragraph}{}Instead of using the commutator word $ghg^{-1}h^{-1}$ we could use any word corresponding to a surface (since we have a bound for it).  Let's check a conjecture by Langley, Levitt and Rower bounding the probability and word is equal to its rearrangement, \cite{LLR}.\end{paragraph}
\begin{thm} 
For any non-abelian group, $n \geq 2$ and $\sigma \in S_n$,
\[  \frac{\# \{(a_1,\dots, a_n): a_1\dots a_n= a_{\sigma(1)}\dots a_{\sigma(n)} \}}{|G|^n} \leq \frac{1}{2} + \frac{1}{2^{2k+1}}\]
where $k$ is the fewest number of block transpositions in a factorization of $\sigma$.
\end{thm}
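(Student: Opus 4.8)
The plan is to read the equation as a ``surface word'', apply the character formula of Theorem~2, and then bound the resulting sum much as in the proof of Theorem~3. First I would rewrite $a_1\cdots a_n=a_{\sigma(1)}\cdots a_{\sigma(n)}$ as $w_\sigma(\vec a)=1$, where
\[ w_\sigma \;=\; a_1 a_2\cdots a_n\, a_{\sigma(n)}^{-1}a_{\sigma(n-1)}^{-1}\cdots a_{\sigma(1)}^{-1}. \]
In $w_\sigma$ each $a_i$ occurs exactly once and each $a_i^{-1}$ exactly once, with opposite orientations along the boundary word, so by the hypothesis of Theorem~2 the word $w_\sigma$ defines a closed orientable surface $X(w_\sigma)$: the $2n$-gon with its edges identified in pairs according to $w_\sigma$. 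Let $g=g(\sigma)$ be its genus; since this surface has one face, $n$ edges and $V$ vertex classes, $\chi\big(X(w_\sigma)\big)=V-n+1$ and $g(\sigma)=\tfrac12(n+1-V)$. (If $\sigma$ fixes $1$ or $n$ then $w_\sigma$ is not cyclically reduced, but cyclic reduction does not change the surface, so this is harmless.)

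Next I would apply Theorem~2 with $f$ the indicator of the identity $e\in G$, so that $\inner{\rho}{f}=\int_G\chi_\rho(g)\overline{f(g)}\,dg=\chi_\rho(e)/|G|=\dim\rho/|G|$. Since $\chi\big(X(w_\sigma)\big)=2-2g$, Theorem~2 gives
\[ \frac{\#\{\vec a\in G^n:\ a_1\cdots a_n=a_{\sigma(1)}\cdots a_{\sigma(n)}\}}{|G|^n}\;=\;\int_{G^n}f\big(w_\sigma(\vec a)\big)\,d\vec a\;=\;\frac1{|G|}\sum_{\rho\in\hat G}(\dim\rho)^{2-2g}. \]
Thus the theorem reduces to (i) pinning down $g(\sigma)$ and (ii) estimating this character sum.

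The crux is (i): I claim $g(\sigma)$ equals the minimum number $k$ of block transpositions needed to factor $\sigma$ (here a block transposition means a swap of two, not necessarily adjacent, blocks, so $k$ is the block-interchange distance of $\sigma$); in fact the theorem only needs $g(\sigma)\ge k$. For $g(\sigma)\le k$: multiplying $\sigma$ by a block transposition replaces $w_\sigma$ by the word obtained by cutting the $2n$-gon at the relevant block boundaries and regluing --- a surgery that changes the genus by at most $1$ --- so, starting from $\sigma=\mathrm{id}$ (a sphere, $g=0$) and iterating, $g(\sigma)\le k$. For $g(\sigma)\ge k$: if $\sigma\ne\mathrm{id}$ then $g(\sigma)\ge1$, and any positive-genus identification pattern on the $2n$-gon contains a ``linked'' pair of glued edges that can be unlinked by swapping two blocks of $\sigma$, producing $\tau$ with $g(\sigma\tau)=g(\sigma)-1$; induction on $g$ then gives $k\le g(\sigma)$. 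This is essentially Christie's formula $d(\sigma)=\tfrac12(n+1-c)$ for the block-interchange distance, with $c$ the number of cycles of the breakpoint graph of $\sigma$ --- which are exactly the $V$ vertex classes above --- and one could instead simply cite it.

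Finally I would run the estimate. If $\sigma=\mathrm{id}$ then $k=0$ and both sides equal $1$. Otherwise $k\ge1$, hence $2-2g\le2-2k\le0$, and since every $\dim\rho\ge1$ this gives $\frac1{|G|}\sum_\rho(\dim\rho)^{2-2g}\le\frac1{|G|}\sum_\rho(\dim\rho)^{2-2k}$. Now let $a=[G:[G,G]]$ be the number of $1$-dimensional irreducibles; $G$ non-abelian forces $|[G,G]|\ge2$, so $a\le|G|/2$. Splitting those $a$ terms off, using $(\dim\rho)^{2-2k}\le 2^{-2k}(\dim\rho)^2$ when $\dim\rho\ge2$, and $\sum_\rho(\dim\rho)^2=|G|$,
\[ \frac1{|G|}\sum_\rho(\dim\rho)^{2-2k}\ \le\ \frac1{|G|}\Big(a+2^{-2k}(|G|-a)\Big)\;=\;\frac{a}{|G|}\big(1-2^{-2k}\big)+2^{-2k}\ \le\ \frac12+\frac1{2^{2k+1}}, \]
the last step using $a/|G|\le1/2$ and $1-2^{-2k}\ge0$. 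This is the asserted bound; it is sharp, being attained by $G=Q_8$ for every $k$ (and $k=1$ recovers the $5/8$ bound). The single real obstacle is the key lemma, specifically producing, for each non-identity $\sigma$, an explicit genus-lowering block transposition --- this is where the combinatorics of the breakpoint graph genuinely enters --- while everything else is the same character-sum bound as in the proof of Theorem~3.
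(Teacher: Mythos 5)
Your proposal is, in substance, the same argument as the paper's: express the equation as a surface word, invoke the character formula for surface words, and bound the resulting sum by splitting off the $|G|/|[G,G]|\le |G|/2$ one-dimensional characters from those of dimension $\ge 2$. The difference is in execution, and yours is the version that actually closes: you keep the normalization $\frac{1}{|G|}$ and the exponent $2-2g$ straight, replace the paper's proof by contradiction (whose displayed chain of inequalities substitutes $2^{k-1}$ for $(\dim\rho)^{k-1}$ in a direction that is not justified for higher-dimensional $\rho$) with a direct estimate $\frac{a}{|G|}(1-2^{-2k})+2^{-2k}\le\frac12+\frac{1}{2^{2k+1}}$, and you even verify sharpness on $Q_8$. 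The one genuinely load-bearing input that both you and the paper defer is the identification of the genus of $X(w_\sigma)$ with the block-interchange distance $k$ (you correctly observe only $g(\sigma)\ge k$ is needed): the paper merely asserts this in the paragraph before its proof, while you sketch both inequalities and point to Christie's formula $d(\sigma)=\tfrac12(n+1-c)$; citing that result is acceptable, but if a self-contained writeup is wanted, the construction of a genus-lowering block transposition for each $\sigma\neq\mathrm{id}$ is the lemma that still has to be proved in full.
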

\begin{paragraph}{}In other words, if the odds of a given word being satisfied is too much past 50\%, the group must be abelian. \end{paragraph}
\begin{paragraph}{} A block transposition transposes two disjoint blocks of consecutive elements. 
\[  (a_1 a_2 a_3 a_4 a_5)^{(1,3,5,2,4)} = (a_4 a_5)(a_1 a_2 a_3) \]
transposing the two blocks $[1,2,3]$ and $[4,5]$.  Topologically, if we consider the word \newline
$a_1 a_2\dots a_n(a_{\sigma(1)} a_{\sigma(2)}\dots a_{\sigma(n)} )^{-1}$
the minimum number of block transpositions $k$ is the genus of the surface.\end{paragraph}
\begin{proof}[Proof by Contradiction] The number of permutations satisfying the word \newline $w = a_1 a_2\dots a_n(a_{\sigma(1)} a_{\sigma(2)}\dots a_{\sigma(n)} )^{-1}$ can be computed exactly. 
\[ \frac{\# \{(a_1,\dots, a_n): a_1\dots a_n= a_{\sigma(1)}\dots a_{\sigma(n)} \}}{|G|^n} = \sum_{\rho \in \irr(G)} (\dim \rho)^{k-1} \]
 Assume to the contrary that 
\[\left(\frac{1}{2} + \frac{1}{2^{2k+1}}\right) \sum_{\rho \in \irr(G)} (\dim \rho)^{k-1}    > \sum_{\rho \in \irr(G)} (\dim \rho )^2 \]
Let $x = 1/|[G,G]|$ be the fraction of characters that are Abelian.  Then next lowest dimension is $\dim \rho = 2$.
\be \left(\frac{1}{2} + \frac{1}{2^{2k+1}}\right) (x \cdot 1 + (1-x) \cdot 2^{k-1})  &>& (x \cdot 1 + (1-x) \cdot 4)\\
\left( \left(\frac{1}{2} + \frac{1}{2^{2k+1}}\right) (1-2^{k-1}) + 3 \right)x &>& 4 - \left(\frac{1}{2} + \frac{1}{2^{2k+1}}\right) 2^{k-1} \\
\frac{1}{|[G,G]|}= x  &>& \frac{ 2^{k-1}- \frac{4}{\frac{1}{2} + \frac{1}{2^{2k+1}}}  }{2^{k-1}- 1 - \frac{3}{\frac{1}{2} + \frac{1}{2^{2k+1}}}} > 1
 \ea
 This is a contradiction since $[G,G]$ contains the identity so $|[G,G]|\geq 1$.
\end{proof}
\begin{proof}[Proof by example]Our proof implicitly uses some non-group cohomology when we cite the Midgal formula or in order to show the probably measure on $w:G^n \to \bb{R}$ is a topological invariant.

In our example $g_1g_2g_3g_1^{-1}g_4g_3^{-1}g_2^{-1}g_4^{-1}$, we can draw an octagon with some sides identified and label the edges with the group elements.  
\fig
\draw[-triangle 60] (1,0)--(0,0);
\draw[-triangle 60] (1,0)--(1.707,0.707);
\draw[-triangle 60] (1.707, 1.707)--(1.707,0.707);
\draw[-triangle 60] ( 1 , 2.414)--(1.707, 1.707);
\draw[-triangle 60] (0, 2.414)--(1, 2.414);
\draw[-triangle 60] (0, 2.414)--(-0.707, 1.707);
\draw[-triangle 60] (-0.707, 1.707)--( -0.707, 0.707);
\draw[-triangle 60] (-0.707, 0.707)--(0,0);
\draw[dashed] (0.5,0)--(-0.353,2.05);
\draw[dashed] (1.35,0.35)--(0.5,2.414);
\draw[dashed] (-0.707, 1.20)--(1.35,2.05);
\draw[dashed] (-0.35,0.35)--(1.70,1.2);
\node at (0.5,-0.25) {$1$};
\node at (-0.353-0.25*0.707,2.05+0.25*0.707) {$1$};
\node at (1.35+0.25*0.707,0.35-0.25*0.707) {$4$};
\node at (0.5,2.414+0.25) {$4$};
\node at (-0.707 -0.25, 1.20) {$2$};
\node at (1.35+0.25*0.707, 2.05+0.25*0.707) {$2$}; 
\node at (-0.35-0.25*0.707, 0.35-0.25*0.707) {$3 $};
\node at (1.70 + 0.25, 1.2) {$3$};
\begin{scope}[xshift=4cm]

\node at (0.5,-0.25) {$1$};
\node at (1.5, -0.25) {$4$};
\node at (0.5,2.25) {$1$};
\node at (1.5, 2.25) {$4$};
\node at (-0.25, 0.5){$3$};
\node at (-0.25, 1.5){$2$};
\node at (2.25, 0.5){$3$};
\node at (2.25, 1.5){$2$};
\draw[-triangle 60] (1,0)--(0,0);

\draw[-triangle 60] (1,0)--(2,0);
\draw[-triangle 60] (2,0)--(2,1);
\draw[-triangle 60] (2,1)--(2,2);
\draw[-triangle 60] (1,2)--(2,2);
\draw[-triangle 60] (1,2)--(0,2);
\draw[-triangle 60] (0,2)--(0,1);
\draw[-triangle 60] (0,1)--(0,0);

\end{scope}
\ure{The surface corresponding to the word $1231^{-1}43^{-1}2^{-1}4^{-1}$ is 
a torus.}
Looking at the diagram we can rearrange our word into the product of commutators.
\[ \bb{P}\left( g_1g_2g_3g_1^{-1}g_4g_3^{-1}g_2^{-1}g_4^{-1}=1\right)
= \bb{P}\left([g_2 g_3,g_4^{-1}g_1]=1\right) = \bb{P}\left([g,h]=1\right) \]  The products $ g= g_2g_3, h=g_4^{-1}g_1$ will be uniformly random so we gave them new variable names.  From the pictures or the algebra, it's clear there was only a single block transposition and so this diagram is a genus 1 surface.  This word should have the same statistics as $[g,h] = ghg^{-1}h$

Establishing the measures the same, the $5/8$ bound must hold here as well.
  \end{proof}

\section{Acknowledgements}
\begin{paragraph}{}Parts of this were done while visiting Institute for for the Physics and Mathematics of the Universe and while at UC Santa Barbara.\end{paragraph}

\end{document}